\documentclass[12pt,oneside]{article}
\usepackage[english]{babel}

\usepackage[letterpaper,top=2cm,bottom=2cm,left=3cm,right=3cm,marginparwidth=1.75cm]{geometry}

\usepackage{amsmath}
\usepackage{amsthm}
\usepackage{amssymb}
\usepackage{times}
\newtheorem{theorem}{Theorem}
\newtheorem{lemma}{Lemma}
\newtheorem{corollary}{Corollary}
\newtheorem{proposition}{Proposition}
\theoremstyle{definition}
\newtheorem{definition}{Definition}
\newtheorem{remark}{Remark}

\newtheorem{conjecture}{Conjecture}
\usepackage{graphicx}
\usepackage[colorlinks=true, allcolors=blue]{hyperref}
\usepackage{mathtools}
\usepackage{indentfirst}
\usepackage{xcolor}  
\usepackage{array}
\usepackage{tikz}
\usepackage{multicol}
\usetikzlibrary{decorations.pathreplacing}
\DeclareMathOperator{\dist}{dist}
\DeclareMathOperator{\diag}{diag}
\DeclareMathOperator{\trs}{trs}
\DeclareMathOperator{\Trs}{Trs}
\DeclareMathOperator{\Deg}{Deg}
\DeclareMathOperator{\SNF}{SNF}

\newcommand{\Ddeg}{D^{\operatorname{deg}}}
\newcommand{\Ddegp}{D^{\operatorname{deg},+}}
\newcommand{\Atrs}{A^{\operatorname{trs}}}
\newcommand{\Atrsp}{A^{\operatorname{trs},+}}
\newcommand{\ZZ}{\mathbb Z}

\newcommand{\RR}{\mathbb R}

\begin{document}
\title{A Family of Simultaneously Cospectral Trees for Degree-Distance Matrices}
\author{\small Limeng Lin$\,^{\rm a}$\quad\quad Quanyu Tang$\,^{\rm a}$\quad\quad Kehua Wang$\,^{\rm b}$ \quad\quad Wei Wang$\,^{\rm a}$\footnote{Corresponding author. Email address: wang\_weiw@163.com}\\
\small $^{\rm a}\,$School of Mathematics and Statistics, Xi'an Jiaotong University, Xi'an, 710049, P. R. China\\
\small $^{\rm b}\,$Department of Computer Science, Columbia University, New York, NY, 10027, USA}
\date{}
	\maketitle

\begin{abstract}

Spectral characterization of graphs for various graph matrices constitutes a central topic in spectral graph theory. Let $G$ be a graph with adjacency matrix $A(G)$, diagonal degree matrix $\Deg(G)$, distance matrix $D(G)$, and transmission matrix \(\Trs(G)\), respectively. Recently, Alfaro and Zapata (2024) introduced the degree-distance matrices \(\Ddegp(G)=\Deg(G)+D(G)\) and \(\Ddeg(G)=\Deg(G)-D(G)\), together with the transmission-adjacency matrices \(\Atrsp(G)=\Trs(G)+A(G)\) and \(\Atrs(G)=\Trs(G)-A(G)\). Based on computational evidence for trees on at most \(20\) vertices, they conjectured that all trees are determined by the spectra of \(\Ddegp\) as well as \(\Ddeg\).
 
 In this paper, we disprove these conjectures by constructing an infinite family of pairs of non-isomorphic trees. More precisely, for each integer \(r\ge 3\), we construct a pair of trees on \(17r-15\) vertices which are simultaneously cospectral with respect to the following six matrices
\[
        A,\quad L,\quad Q,\quad D,\quad \Ddegp,\quad \Ddeg .
\]
The construction is based on an \(r\)-regularized leaf extension and an equitable-partition reduction. We also record a simple sign-switching observation for transmission-adjacency matrices: if \(G\) is bipartite, then \(\Atrs(G)\) and \(\Atrsp(G)\) are similar via a diagonal \(\{\pm1\}\)-matrix and have the same Smith normal form. Consequently, for trees, the spectral and Smith normal form problems for \(\Atrs\) and \(\Atrsp\) are equivalent.
\end{abstract}

\noindent
\textbf{Keywords:} Graph spectra; Cospectral trees; Degree-distance matrix; Distance matrix; Smith normal form

\noindent
\textbf{AMS Classification:} 05C50; 05C60

\section{Introduction}
Let \(G\) be a simple connected graph with vertex set
\(V(G)=\{v_1,\ldots,v_n\}\). The \emph{adjacency matrix} of \(G\) is denoted by
\(A(G)\), and the \emph{diagonal degree matrix} is denoted by \(\Deg(G)\). The
\emph{Laplacian matrix} and the \emph{signless Laplacian matrix} of \(G\) are $ L(G)=\Deg(G)-A(G)$ and $Q(G)=\Deg(G)+A(G),$
respectively. The \textit{distance matrix} of $G$ is a matrix $D(G)=\left [ \dist_G(v_i,v_j) \right ]_{n\times n}$, where $\dist_G(v_i,v_j)$ denotes shortest-path distance between $v_i$ and $v_j$.
The transmission of a vertex \(v\) is
\[\trs_G(v)=\sum_{u\in V(G)}\dist_G(u,v),\]
and the then \emph{transmission matrix}  is
\[\Trs(G)=\diag(\trs_G(v_1),\ldots,\trs_G(v_n)).\]
In~\cite{AlfaroZapata2024},  Alfaro and Zapata defined some matrices as follows:
\begin{itemize}
        \item the \emph{degree-distance matrix} $\Ddeg(G)$ of $G$ as $\Deg(G)- D(G)$,
        \item the \emph{signless degree-distance matrix} $\Ddegp(G)$ of $G$ as $\Deg(G)+D(G)$,
        \item the \emph{transmission-adjacency matrix} $\Atrs(G)$ of $G$ as $\Trs(G)-A(G)$,
        \item the \emph{signless transmission-adjacency matrix }$\Atrsp(G)$ of G as $\Trs(G)+A(G)$.
\end{itemize}

For a matrix \(M\) associated with a graph $G$, the \textit{M-spectrum} of $G$ consists of all the eigenvalues (including the algebraic multiplicities) of $M$, and two graphs \(G\) and \(H\) are called
\emph{\(M\)-cospectral} if \(G\) and \(H\) have the same $M$-spectrum.  A graph $G$ \textit{is determined by M-spectrum} if any graph $M$-cospectral with $G$ is isomorphic to $G$ ($\mathrm{DMS}$ for short).

A fundamental problem in spectral graph theory is to determine which graphs are characterized by their spectra. It was once believed that every graph might be determined by its spectrum, until Collatz and Sinogowitz~\cite{CS} constructed a pair of cospectral non-isomorphic trees. We refer to van Dam and Haemers~\cite{DH,DH1} for surveys of this problem for several graph matrices which discussed both spectral characterizations and constructions of non-isomorphic cospectral graphs. Although many results have been obtained, most known DS graphs rely on strong structural properties. Further, Wang ~\cite{W4,W3} studied this problem from the perspective of the generalized spectrum and established arithmetic criteria for determining whether a family of graphs are determined by their generalized spectrum.

Recently, Alfaro and Zapata~\cite{AlfaroZapata2024} investigated whether graphs can be distinguished by combinations of spectra of known graph-associated matrices. They introduced the degree-distance matrices and the transmission-adjacency matrices, and carried out a computational study of \(\Ddegp,\Ddeg,\Atrsp\), and \(\Atrs\) on connected graphs and trees. In particular, they reported that among all trees on at most 20 vertices, there are no \(\Ddegp\)-cospectral pairs and no \(\Ddeg\)-cospectral pairs. This led to the following Conjecture~\ref{conj}. In a related direction, Abiad et al.~\cite{AbiadAlfaroVillagran2025} computed Smith normal forms for transmission-adjacency matrices in several cases.

\begin{conjecture}[Alfaro--Zapata~\cite{AlfaroZapata2024}]\label{conj}
All trees are determined by the \(\Ddegp\)-spectrum, and all trees are determined by the \(\Ddeg\)-spectrum.
\end{conjecture}

The main result of this paper disproves the aforementioned conjecture.
More precisely, we
construct an infinite family of pairs of non-isomorphic trees that are not only
\(\Ddegp\)-cospectral and \(\Ddeg\)-cospectral, but also simultaneously
cospectral with respect to several classical graph matrices.

\begin{theorem}\label{thm:intro-main}
For each integer \(r \ge 3\), there exist two non-isomorphic trees
\(T_1^{(r)}\) and \(T_2^{(r)}\), each on \(17r - 15\) vertices, such that
\(T_1^{(r)}\) and \(T_2^{(r)}\) are simultaneously cospectral with respect to
\[
        A,\quad L,\quad Q,\quad D,\quad \Ddegp,\quad \Ddeg .
\]
\end{theorem}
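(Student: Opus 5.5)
The plan has three stages: build an explicit pair from a small base pair of trees, reduce each matrix by an equitable partition, and check the reduced matrices for cospectrality. The count $17r-15=2+17(r-1)$ suggests the shape of the construction. Start from a fixed pair of non-isomorphic base trees $S_1,S_2$ that are already cospectral for $A$, $L$, $Q$ and $D$; such pairs exist, for instance from Godsil--McKay switching or from the classical examples cospectral for all of these at once. Then apply an ``$r$-regularized leaf extension''. Concretely, the trees $T_i^{(r)}$ arise by attaching $r-1$ identical copies of a 17-vertex branch at designated vertices, with the attachment pattern chosen so that degrees become uniform along the relevant vertex classes. Distances from a vertex in one copy to the rest of the tree then depend only on a few parameters. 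The first step is to fix $S_1,S_2$ and the attachment pattern explicitly, and to verify non-isomorphism via an invariant such as the degree sequence or the multiset of branch types at the vertex of maximum degree.

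The second step is the equitable-partition reduction. Let $\Pi$ be the partition of $V(T_i^{(r)})$ that identifies corresponding vertices across the $r-1$ isomorphic copies and keeps the base vertices as singletons. For each $M\in\{A,L,Q,D,\Ddegp,\Ddeg\}$ there is a symmetric group action permuting the copies, so $M$ commutes with it. Hence $\RR^n$ splits into two parts:
\begin{itemize}
\item the $\Pi$-constant vectors, on which $M$ acts as a quotient matrix $M_\Pi$ of fixed size independent of $r$ (entries polynomial in $r$);
\item the orthogonal complement, spanned by vectors supported on the copies with coordinates summing to zero across copies.
\end{itemize}
On the complement, $M$ acts as $(r-2)$ copies of a ``difference'' block $B_M$. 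This block is the restriction of $M$ to a single branch minus the cross-copy interaction. For $D$, the interaction between two copies through the attachment vertex is a rank-type term: each entry is $\dist(x,\text{root})+\dist(\text{root},y)$. This term vanishes on mean-zero combinations, which is why the extension keeps the difference block identical for both trees. The characteristic polynomial therefore factors as $\det(xI-M_\Pi)\cdot\det(xI-B_M)^{r-2}$.

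The third step checks both factors. The difference blocks coincide for $T_1^{(r)}$ and $T_2^{(r)}$, because the branches and their local degrees are the same. So cospectrality reduces to $\det(xI-M_\Pi(T_1))=\det(xI-M_\Pi(T_2))$ for all six matrices. For $A$, $L$ and $Q$ this can also be obtained structurally, via cospectral rooted subtrees and Schwenk-type formulas. For $D$, $\Ddegp$ and $\Ddeg$ one compares the two quotient matrices directly, as small symbolic matrices in $r$. The preferred route is to exhibit an explicit orthogonal (or at least invertible) matrix $P$, independent of $r$, that conjugates the quotients for $\Deg$ and for $D$ simultaneously. Then every combination $\Deg\pm D$, and likewise $\Deg\pm A$, is handled at once.

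I expect the main obstacle to be the simultaneous treatment of $\Deg$ and $D$. Cospectrality of $D$ and of $A$ separately does not give cospectrality of $\Deg\pm D$, so a single intertwiner for both is needed. If no $r$-independent $P$ can be found, the fallback is to compute the characteristic polynomials of the quotients symbolically. Their coefficients are polynomials in $r$ of bounded degree, so checking enough values of $r$ (finitely many, up to the degree bound) establishes equality for all $r\ge 3$.
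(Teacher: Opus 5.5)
\textbf{There is a genuine gap.} The proposal never exhibits the two trees, and the construction it sketches cannot produce them as described.

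Your partition puts all $r-1$ copies into a single orbit of a symmetric group. So every copy is joined in the same way to the same base vertex, and the one remaining edge can only join the two base vertices. Such a tree is determined by the rooted branch. If the branches are the same for $T_1^{(r)}$ and $T_2^{(r)}$, which is what you use to conclude that the blocks $B_M$ coincide, then the two trees are isomorphic. If the branches differ, the blocks $B_M$ differ, and proving $\det(xI-B_M)$ equal for all six $M$, including $\Deg\pm D$, is the original problem again.

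The requirement that the base pair be ``already cospectral for $A,L,Q,D$'' is beside the point. What must hold is equality of the $r$-dependent quotient characteristic polynomials for all six matrices. Alfaro and Zapata found no $\Ddegp$- or $\Ddeg$-cospectral trees on at most $20$ vertices, so there is no classical example to invoke. The whole content of the theorem is a concrete pair that passes this check. Relatedly, the degree sequence will not separate the trees. In the paper's pair it coincides, and that coincidence is what makes the complement multiplicities match.

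The paper avoids your ``main obstacle'' by its choice of construction:
\begin{itemize}
\item It takes two explicit 17-vertex trees of maximum degree $3$ with equal degree sequences. It attaches $p_i=r-\deg v_i$ pendant leaves to \emph{every} vertex $v_i$, giving $17+(17r-32)=17r-15$ vertices.
\item The cells are $\{v_i\}$ and the leaf sets $L_i$. The complement consists of vectors vanishing on the original vertices and summing to zero on each $L_i$.
\item On the complement, $\Deg=I$, $A=0$ and $D=-2I$. So $A,L,Q,D,\Ddegp,\Ddeg$ act as the scalars $0,1,1,-2,-1,3$, with the same multiplicity $\sum_{p_i>0}(p_i-1)$ for both trees.
\end{itemize}
Hence no simultaneous intertwiner for $\Deg$ and $D$ is needed. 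Only the quotients must be compared; they have order $34$ for $r\ge 4$, with entries affine in $r$. The paper does this by exact computation at $r=3$ (empty leaf cells deleted) and at $r=4,\dots,38$. This suffices because the difference of the quotient polynomials has degree at most $34$ in $r$.

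Your interpolation-in-$r$ fallback matches this last step. Without a concrete pair, and an extension by leaves only that makes every matrix scalar on the complement, there is nothing for it to verify.
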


\begin{remark}
Consequently, trees are generally not determined by either the $\Ddegp$-spectrum or the $\Ddeg$-spectrum. Moreover, Theorem~\ref{thm:intro-main} shows that no tuple
of spectra chosen only from the matrices in the displayed
list can determine all trees.
\end{remark}

The proof of Theorem~\ref{thm:intro-main} is based on a regularized leaf
extension. Starting from two base trees on \(17\) vertices, we
attach leaves to every original vertex until all original vertices have degree
\(r\). The resulting trees admit natural equitable partitions. The characteristic
polynomials of the six matrices above decomposes into a fixed factor
and the characteristic polynomial of the corresponding quotient matrix. The
construction is completed by verifying that the relevant quotient characteristic
polynomials are identical. More details are provided in Section~\ref{sec:main-proof}.

We also give a second observation concerning the two transmission-adjacency
matrices. If \(G\) is bipartite, then \(\Atrs(G)\) and \(\Atrsp(G)\) are similar
by a diagonal sign matrix. Since this sign matrix is unimodular, the two
matrices also have the same Smith normal form. Thus, for trees, the spectral and
Smith normal form problems for \(\Atrs\) and \(\Atrsp\) are equivalent. In addition, we prove that the infinite family of tree pairs constructed in Theorem~\ref{thm:intro-main} fails to be cospectral with respect to either of these two transmission-adjacency matrices.

The rest of the paper is organized as follows. In
Section~\ref{sec:sign-switching}, we prove the sign-switching result for
transmission-adjacency matrices. In Section~\ref{sec:regularized}, we introduce
the regularized leaf extension and derive the quotient factorizations needed in
the proof. Section~\ref{sec:main-proof} is devoted to the construction of the
cospectral trees and the proof of Theorem~\ref{thm:intro-main}.
\section{A Sign Switch for the Transmission-Adjacency Matrices}
\label{sec:sign-switching}

We first introduce a simple but useful observation on the relation between the
transmission-adjacency matrix and the signless transmission-adjacency matrix
on bipartite graphs. 

Recall that a matrix $U$ with integer entries is called \emph{unimodular} if $\det U = \pm 1$. For every integral matrix $M$ with full rank, there exist two unimodular matrices $U$ and $V$ such that $M=USV$, where $S={\rm diag}(d_1,d_2,\ldots,d_n)$, and $d_i\mid d_{i+1}$ for $1\leq i\leq n-1$. Moreover, $S$ is known as the \emph{Smith normal form} of $M$ (SNF for short).

Although the two matrices are different in general, they
become equivalent under a diagonal sign switching whenever the underlying graph
is bipartite.  In particular, for trees, the spectral and Smith normal form
problems for \(\Atrs\) and \(\Atrsp\) are identical.

\begin{lemma}\label{lem:sign-switch}
Let \(G\) be a connected bipartite graph. Then \(\Atrs(G)\) and
\(\Atrsp(G)\) are similar over \(\ZZ\). More precisely, there exists a
diagonal matrix \(S\) with diagonal entries in \(\{\pm1\}\) such that
\[
        S\Atrsp(G)S=\Atrs(G).
\]
Moreover, they have the same characteristic polynomial and the same Smith
normal form:
\[
\chi_{\Atrsp(G)}(x)=\chi_{\Atrs(G)}(x)\qquad\SNF(\Atrsp(G))=\SNF(\Atrs(G)).
\]
\end{lemma}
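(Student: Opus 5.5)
The plan is to use the bipartition of \(G\) to build the sign matrix explicitly. Let \(V(G)=X\cup Y\) be a bipartition, so every edge joins \(X\) to \(Y\). Define \(S=\diag(s_1,\ldots,s_n)\) with \(s_i=1\) if \(v_i\in X\) and \(s_i=-1\) if \(v_i\in Y\). Then \(S\) is an integral diagonal matrix with \(S^2=I\), so \(S=S^{-1}=S^{T}\) and \(\det S=\pm1\). In particular \(S\) is unimodular.

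The first step is to compute the conjugation entrywise. For any \(n\times n\) matrix \(M\) we have \((SMS)_{ij}=s_is_jM_{ij}\).

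\begin{itemize}
\item Since \(\Trs(G)\) is diagonal and \(s_i^2=1\), we get \(S\Trs(G)S=\Trs(G)\).
\item For the adjacency matrix, \(A_{ij}\neq 0\) only when \(v_iv_j\) is an edge. Such an edge has one end in \(X\) and one in \(Y\), so \(s_is_j=-1\) and \(SA(G)S=-A(G)\).
\end{itemize}

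By linearity, \(S\Atrsp(G)S=\Trs(G)-A(G)=\Atrs(G)\). Because \(S=S^{-1}\) is integral, this is a similarity over \(\ZZ\). Connectedness is not actually needed beyond ensuring that the matrices are defined and that a bipartition exists.

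The remaining two claims follow formally. Similar matrices have equal characteristic polynomials:
\[
\chi_{\Atrs(G)}(x)=\det(xI-S\Atrsp(G)S)=\det\bigl(S(xI-\Atrsp(G))S\bigr)=\det(S)^2\chi_{\Atrsp(G)}(x).
\]
Since \(\det(S)^2=1\), the two characteristic polynomials coincide.

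For the Smith normal form, write \(\Atrsp(G)=USV\) in the form recalled before the lemma, using a different letter for the Smith form to avoid a clash with the sign matrix. Then \(\Atrs(G)=(SU)\,\SNF(\Atrsp(G))\,(VS)\). Here \(SU\) and \(VS\) are products of unimodular matrices, hence unimodular. By uniqueness of the Smith normal form, \(\SNF(\Atrs(G))=\SNF(\Atrsp(G))\). Alternatively, multiplying by \(S\) on either side does not change the gcds of the \(k\times k\) minors, which also gives the result.

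No step is a real obstacle. The only point requiring care is the entrywise sign computation, which relies on every nonzero off-diagonal entry of \(A(G)\) lying between the two colour classes. Note also that the full-rank hypothesis in the recalled definition of the SNF is not needed, since equivalence by unimodular matrices preserves the SNF in general.
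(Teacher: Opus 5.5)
Your proposal is correct and follows the paper's proof essentially step for step. It uses the same bipartition sign matrix $S$, the same identities $S\Trs(G)S=\Trs(G)$ and $SA(G)S=-A(G)$, and the same conclusion via similarity and the unimodularity of $S$. The only differences are cosmetic: you compute entrywise via $s_is_j$ instead of using the block form of $A(G)$, and you write out the unimodular-equivalence step for the Smith normal form a little more explicitly.
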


\begin{proof}
Let \(V(G)=X\sqcup Y\) be a bipartition of \(G\), where
\(|X|=s\) and \(|Y|=t\). Suppose
that the adjacency matrices of \(G\) has the form
\[
        A(G)=
        \begin{bmatrix}
        O & M\\
        M^{\mathrm T} & O
        \end{bmatrix},
\]
where \(M\) is an \(s\times t\) matrix. We define \(S_{vv}=1\) for \(v\in X\), and \(S_{vv}=-1\) for
\(v\in Y\), and hence
\[
        S=
        \begin{bmatrix}
        I_s & O\\
        O & -I_t
        \end{bmatrix}.
\]
 Since \(\Trs(G)\) is diagonal, we have $S\Trs(G)S=\Trs(G).$
On the other hand,
\[
\begin{aligned}
        SA(G)S
        &=
        \begin{bmatrix}
        I_s & O\\
        O & -I_t
        \end{bmatrix}
        \begin{bmatrix}
        O & M\\
        M^{\mathrm T} & O
        \end{bmatrix}
        \begin{bmatrix}
        I_s & O\\
        O & -I_t
        \end{bmatrix}  \\
        &=
        \begin{bmatrix}
        O & -M\\
        -M^{\mathrm T} & O
        \end{bmatrix}
        =
        -A(G).
\end{aligned}
\]
Therefore, $S\bigl(\Trs(G)+A(G)\bigr)S=\Trs(G)-A(G)$ ,
that is, $S\Atrsp(G)S=\Atrs(G).$

Since \(S^{-1}=S\), the two matrices are similar over \(\ZZ\), and hence their
characteristic polynomials agree. Finally, \(S\) is unimodular over \(\ZZ\). Thus left and right multiplication
by \(S\) gives an equivalence over \(\ZZ\), which preserves the Smith normal
form.
\end{proof}

\begin{corollary}\label{cor:trees-plus-minus-Atrs}
For every tree \(T\),
\[
        \chi_{\Atrsp(T)}(x)=\chi_{\Atrs(T)}(x),
        \qquad
        \SNF(\Atrsp(T))=\SNF(\Atrs(T)).
\]
\end{corollary}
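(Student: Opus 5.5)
The corollary is a direct specialization of Lemma~\ref{lem:sign-switch}, so the plan is to check that every tree satisfies that lemma's hypotheses and then read off its conclusions. A tree \(T\) is connected by definition. It is also bipartite: it has no cycles, hence no odd cycles. Concretely, fix a root \(v_0\) and put \(X=\{v:\dist_T(v_0,v)\text{ even}\}\) and \(Y=\{v:\dist_T(v_0,v)\text{ odd}\}\). In a tree the endpoints of every edge have distances to \(v_0\) that differ by exactly one, so every edge joins \(X\) to \(Y\). With this ordering of the vertices, \(A(T)\) has the block form required in the proof of Lemma~\ref{lem:sign-switch}.

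Applying Lemma~\ref{lem:sign-switch} with \(G=T\) gives a diagonal \(\{\pm1\}\)-matrix \(S\) such that \(S\Atrsp(T)S=\Atrs(T)\). It then yields directly \(\chi_{\Atrsp(T)}(x)=\chi_{\Atrs(T)}(x)\) and \(\SNF(\Atrsp(T))=\SNF(\Atrs(T))\).

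The only point needing care is the degenerate case of a single-vertex tree, where the bipartition has an empty part. Here the statement is trivial, since \(\Atrsp(T)=\Atrs(T)=[0]\) and \(S=[1]\) works. One might also worry that the Smith normal form was defined above only for full-rank matrices. However, the equivalence \(\Atrs(T)=S\,\Atrsp(T)\,S\) via unimodular \(S\) preserves the Smith normal form in general, so the conclusion holds in any case and there is no real obstacle.
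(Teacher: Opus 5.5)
Your proposal is correct and follows the paper's proof: every tree is a connected bipartite graph, so Lemma~\ref{lem:sign-switch} applies directly with \(G=T\). Your extra remarks are accurate but go beyond what the paper spells out: the explicit parity bipartition, the one-vertex case, and the point that unimodular equivalence preserves the Smith normal form without any full-rank assumption.
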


\begin{proof}
Every tree is bipartite. The result follows immediately from
Lemma~\ref{lem:sign-switch}.
\end{proof}

\begin{remark}
Thus, in the class of trees, the spectral and Smith normal form problems for
\(\Atrs\) and \(\Atrsp\) should be treated as a single problem. In particular,
two trees are \(\Atrs\)-cospectral if and only if they are
\(\Atrsp\)-cospectral.
\end{remark}

\begin{remark}
Alfaro and Zapata reported that there are
no \(\Atrs\)-cospectral trees with up to \(20\) vertices, while they
exhibit two \(\Atrsp\)-cospectral trees on \(20\) vertices; see ~\cite[Section~2.1]{AlfaroZapata2024}. Since every tree is
bipartite, Corollary~\ref{cor:trees-plus-minus-Atrs} implies that these two
statements cannot both hold as stated. In particular, any
\(\Atrsp\)-cospectral pair of trees is automatically an \(\Atrs\)-cospectral
pair.
\end{remark}

\section{Regularized Leaf Extensions of Trees}
\label{sec:regularized}


Let \(G\) be a tree with vertex set \(V(G)=\{v_1,\ldots,v_n\}\), and let
\(\Delta(G)\) denote its maximum degree. Fix an integer \(r\ge \Delta(G)\), and
put $p_i=r-\deg_G(v_i),\  1\le i\le n$.
\begin{definition}
The \emph{\(r\)-regularized leaf extension} of \(G\), denoted by \(R_r(G)\), is
the tree obtained from \(G\) by attaching \(p_i\) new leaves to \(v_i\), for each
\(1\le i\le n\).
\end{definition}

Thus, every original vertex of \(G\) has degree \(r\) in \(R_r(G)\), whereas all
new vertices are leaves. Let $I_+=\{i:p_i>0\}.$
The tree \(R_r(G)\) has a natural equitable partition
\begin{equation}
        \Pi=\{O_1,\ldots,O_n\}\cup\{L_i:i\in I_+\},
        \label{pi}
\end{equation}
where \(O_i=\{v_i\}\) is the singleton consisting of the original vertex \(v_i\),
and \(L_i\) is the set of leaves attached to \(v_i\). For convenience, we set $\mathcal{M}=\{A,L,Q,D,\Ddegp,\Ddeg\},$  \(A_G=A(G)\), \(D_G=D(G)\), \(P=\operatorname{diag}(p_1,\ldots,p_n)\). Let $I$ be $n$
-order identity matrix and
\(J\) is the all-one matrix of order \(n\).

\begin{proposition}\label{prop:quotient-matrices}
Let \(G\) be a tree, and let \(R_r(G)\) be its \(r\)-regularized leaf extension. Suppose that \(I_+=\{1,\ldots,n\}\).
With respect to the equitable partition \(\Pi\) defined in \eqref{pi}, the quotient
matrices of \(M(R_r(G))\), where \(M\in\mathcal{M}\), are as follows:
\[
        \mathcal Q_A(R_{r}(G))
        =
        \begin{pmatrix}
        A_G & P\\
        I   & 0
        \end{pmatrix},
        \qquad
        \mathcal Q_L(R_{r}(G))
        =
        \begin{pmatrix}
        rI-A_G & -P\\
        -I     & I
        \end{pmatrix},
\]
\[
        \mathcal Q_Q(R_{r}(G))
        =
        \begin{pmatrix}
        rI+A_G & P\\
        I      & I
        \end{pmatrix},
        \qquad
        \mathcal Q_D(R_{r}(G))
        =
        \begin{pmatrix}
        D_G       & (D_G+J)P\\
        D_G+J     & (D_G+2J)P-2I
        \end{pmatrix},
\]
and for \(\varepsilon\in\{1,-1\}\),
\[
        \mathcal Q_{\varepsilon}^{\deg}(R_{r}(G))
        =
        \begin{pmatrix}
        rI+\varepsilon D_G
            & \varepsilon(D_G+J)P\\
        \varepsilon(D_G+J)
            & I+\varepsilon\bigl((D_G+2J)P-2I\bigr)
        \end{pmatrix}.
\]
 Moreover,
\(\mathcal Q_{1}^{\deg}(R_r(G))\) and
\(\mathcal Q_{-1}^{\deg}(R_r(G))\) are the quotient matrices of
\(\Ddegp(R_r(G))\) and \(\Ddeg(R_r(G))\), respectively.
\end{proposition}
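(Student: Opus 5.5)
The plan is a direct computation of quotient matrices. Order the cells of \(\Pi\) as \(O_1,\ldots,O_n,L_1,\ldots,L_n\); since \(I_+=\{1,\ldots,n\}\), all \(L_i\) are nonempty with \(|L_i|=p_i\). For a matrix \(M\) of \(R_r(G)\), the quotient entry in block position \((X,Y)\) is the row sum over \(Y\) of \(M\) restricted to any row indexed by a vertex of \(X\). Computing this entry also checks that the row sum is independent of the chosen row, which is what makes \(\Pi\) equitable for \(M\).

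First handle \(A\), using the local structure of \(R_r(G)\).
\begin{itemize}
\item Row \(v_i\): \(v_i\) is adjacent to \(v_j\) exactly when \((A_G)_{ij}=1\), and to all \(p_i\) leaves of \(L_i\) and none of \(L_j\) for \(j\neq i\). This gives the blocks \(A_G\) and \(P\).
\item Row of a leaf \(\ell\in L_i\): \(\ell\) has exactly one neighbor, \(v_i\), and no neighbors among the leaves. This gives the blocks \(I\) and \(0\).
\end{itemize}
Then \(L\) and \(Q\) follow by adding or subtracting the diagonal degree matrix. Degrees are constant on cells (\(r\) on each \(O_i\), \(1\) on each \(L_i\)), so its quotient is \(\operatorname{diag}(rI,I)\). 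This yields \(\mathcal Q_L=\operatorname{diag}(rI,I)-\mathcal Q_A\) and \(\mathcal Q_Q=\operatorname{diag}(rI,I)+\mathcal Q_A\). Quotients are linear in \(M\) whenever \(\Pi\) is equitable for each summand, so sums of such matrices have the corresponding sums of quotients.

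For \(D\), I would use the distances in \(R_r(G)\): \(\dist(v_i,v_j)=(D_G)_{ij}\), \(\dist(v_i,\ell)=(D_G)_{ij}+1\) for \(\ell\in L_j\), and, for \(\ell\in L_i\), \(\ell'\in L_j\), \(\dist(\ell,\ell')=(D_G)_{ij}+2\) when \(\ell\neq\ell'\) and \(0\) when \(\ell=\ell'\).
\begin{itemize}
\item Row \(v_i\), cell \(L_j\): the sum is \(p_j((D_G)_{ij}+1)\), giving the block \((D_G+J)P\).
\item Row \(\ell\in L_i\), cell \(O_j\): the sum is \((D_G)_{ij}+1\), giving \(D_G+J\).
\item Row \(\ell\in L_i\), cell \(L_j\) with \(j\neq i\): the sum is \(p_j((D_G)_{ij}+2)\).
\item Row \(\ell\in L_i\), cell \(L_i\): the sum is \((p_i-1)\cdot 2=p_i(0+2)-2\), using \((D_G)_{ii}=0\). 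This diagonal correction is what turns \((D_G+2J)P\) into \((D_G+2J)P-2I\).
\end{itemize}
The main care point is this self-cell bookkeeping and confirming each sum depends only on \(i\), not on the chosen leaf. Both hold because all leaves at \(v_i\) are symmetric.

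Finally, \(\Ddegp=\Deg+D\) and \(\Ddeg=\Deg-D\), so their quotients are \(\operatorname{diag}(rI,I)+\varepsilon\mathcal Q_D\) with \(\varepsilon=\pm1\). Expanding this gives exactly the displayed \(\mathcal Q^{\deg}_{\varepsilon}\), with \(\varepsilon=1\) for \(\Ddegp\) and \(\varepsilon=-1\) for \(\Ddeg\).
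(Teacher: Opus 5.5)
Your proposal is correct and is essentially the paper's proof. Both compute the quotient entries as cell row sums, reading off $A_G,P,I,0$ for $A$; both obtain $\mathcal Q_L$, $\mathcal Q_Q$ and $\mathcal Q^{\deg}_{\varepsilon}$ by combining with the quotient $\operatorname{diag}(rI,I)$ of $\Deg$; and both handle the self-cell $L_i$ in $D$ via $2(p_i-1)=2p_i-2$ to get $(D_G+2J)P-2I$. Your remarks on linearity of quotients and on the row sums not depending on the chosen vertex only make explicit what the paper leaves implicit.
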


\begin{proof}

We prove the statement by explicitly analyzing the row structure of each matrix
with respect to the equitable partition
\[
\Pi = \{O_1,\ldots,O_n\} \cup \{L_i : i \in I_+\},
\]
where $O_i=\{v_i\}$ and $L_i$ is the set of leaves attached to $v_i$.

For adjacency matrix $A$, an original vertex $v_i$, its neighbors inside $R_r(G)$ consist of the neighbors of $v_i$ in $G$, and all leaves in $L_i$. Hence the row block corresponding to $(O_i, O_j)$ is $(A_G)_{ij}$,
and $(O_i, L_j)$ is $P_{ij}$. For a leaf vertex in $L_i$, it is adjacent only to $v_i$,
which yields the block $(I,0)$. Thus,
\[
\mathcal Q_A(R_r(G)) =
\begin{pmatrix}
A_G & P \\
I   & 0
\end{pmatrix}.
\]

For $L = \Deg - A$ and $Q = \Deg + A$, since every original vertex has degree $r$ in $R_r(G)$, we have
\[
\Deg(R_r(G)) =
\begin{pmatrix}
rI & 0 \\
0  & I
\end{pmatrix}.
\]
Then we immediately have
\[
\mathcal Q_L(R_r(G)) =
\begin{pmatrix}
rI - A_G & -P \\
-I & I
\end{pmatrix},
\quad
\mathcal Q_Q(R_r(G)) =
\begin{pmatrix}
rI + A_G & P \\
I & I
\end{pmatrix}.
\]

For Distance matrix $D$, we analyze row sums according to vertex type. For $v_i \in O_i$,
every leaf $u \in L_j$ satisfies
\[
\mathrm{dist}(v_i,u) = \mathrm{dist}_G(v_i,v_j) + 1,
\] and hence the
corresponding block is \((D_G+J)P\). For a leaf $u \in L_i$, the distances to the
original vertices are given by the \(i\)-th row of \(D_G+J\). Moreover, its total
distance to the leaves in \(L_j\) is \(p_j(\dist_G(v_i,v_j)+2)\) when \(j\ne i\),
whereas its total distance to the other leaves in \(L_i\) is \(2(p_i-1)\). This
gives the lower-right block \((D_G+2J)P-2I\).
Thus,
\[
\mathcal Q_D(R_r(G)) =
\begin{pmatrix}
D_G & (D_G + J)P \\
D_G + J & (D_G + 2J)P - 2I
\end{pmatrix}.
\]

Recall $\Ddegp = \Deg + D$ and $ \Ddeg = \Deg - D.$ Then
we obtain directly
\[
\mathcal Q^{\deg}_{\varepsilon}(R_r(G)) =
\begin{pmatrix}
rI + \varepsilon D_G & \varepsilon(D_G + J)P \\
\varepsilon(D_G + J) & I + \varepsilon((D_G + 2J)P - 2I)
\end{pmatrix},
\quad \varepsilon \in \{1,-1\}.
\]

 The proof has been completed.
\end{proof}

\begin{remark}
If \(I_+\ne\{1,\ldots,n\}\), then each of the quotient
matrices above is obtained from the corresponding displayed
block quotient by deleting the row and column indexed by \(L_i\), for each
\(i\notin I_+\).
\end{remark}

\begin{lemma}\label{lem:quotient-factorization}
Let \(I_+=\{i:p_i>0\}\), and $ m(R_r(G)):=\sum_{i\in I_+}(p_i-1)$.
With respect to the equitable partition \(\Pi\) in \eqref{pi}, the
characteristic polynomials of the matrices associated with \(R_r(G)\) satisfy
\begin{align*}
        \chi_{A(R_r(G))}(x)
        &=x^{m(R_r(G))}\chi_{\mathcal Q_A(R_r(G))}(x),\\
        \chi_{L(R_r(G))}(x)
        &=(x-1)^{m(R_r(G))}\chi_{\mathcal Q_L(R_r(G))}(x),\\
        \chi_{Q(R_r(G))}(x)
        &=(x-1)^{m(R_r(G))}\chi_{\mathcal Q_Q(R_r(G))}(x),\\
        \chi_{D(R_r(G))}(x)
        &=(x+2)^{m(R_r(G))}\chi_{\mathcal Q_D(R_r(G))}(x).
\end{align*}
Moreover, for each \(\varepsilon\in\{1,-1\}\),
\[
        \chi_{\Deg(R_r(G))+\varepsilon D(R_r(G))}(x)
        =
        \bigl(x-(1-2\varepsilon)\bigr)^{m(R_r(G))}
        \chi_{\mathcal Q_{\varepsilon}^{\deg}(R_r(G))}(x).
\]
\end{lemma}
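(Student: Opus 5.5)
The plan is the standard equitable-partition splitting. Let \(N=n+\sum_{i\in I_+}p_i\) be the order of \(R_r(G)\) and \(k=n+|I_+|\) the number of cells of \(\Pi\), so that \(N-k=m(R_r(G))\). For each \(M\in\mathcal M\), write \(M=M(R_r(G))\). Let \(C\) be the \(N\times k\) characteristic matrix of \(\Pi\). The first step is to check that \(MC=C\,\mathcal Q_M\), where \(\mathcal Q_M\) is the quotient matrix from Proposition~\ref{prop:quotient-matrices}, with the rows and columns of empty \(L_i\) deleted as in the Remark. This identity is exactly the statement that \(\Pi\) is equitable for \(M\). It is verified cell by cell: the row sums of each block are constant on each cell, as already computed in the proof of Proposition~\ref{prop:quotient-matrices}. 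Consequently the column space \(\mathcal U=\operatorname{col}(C)\) is \(M\)-invariant and \(M|_{\mathcal U}\) has characteristic polynomial \(\chi_{\mathcal Q_M}\).

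Since every \(M\in\mathcal M\) is symmetric, \(\mathcal U^\perp\) is also \(M\)-invariant. Therefore \(\chi_M(x)=\chi_{\mathcal Q_M}(x)\,\chi_{M|_{\mathcal U^\perp}}(x)\). The remaining task is to show that \(M\) acts on \(\mathcal U^\perp\) as a scalar \(\lambda_M\).

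To do this, I will describe \(\mathcal U^\perp\) explicitly. A vector lies in \(\mathcal U^\perp\) exactly when it sums to zero on each cell. The singleton cells \(O_i\) force those coordinates to vanish, so \(\mathcal U^\perp=\bigoplus_{i\in I_+}W_i\), where \(W_i\) consists of the vectors supported on \(L_i\) with coordinate sum zero. This gives \(\dim\mathcal U^\perp=\sum_{i\in I_+}(p_i-1)=m(R_r(G))\).

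Now fix \(y\in W_i\) and compute \(My\) coordinatewise. The two leaves \(u,u'\in L_i\) are twins: every vertex \(w\notin\{u,u'\}\) satisfies \(\dist(w,u)=\dist(w,u')\), and \(\deg u=\deg u'=1\).

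For any vertex \(w\notin L_i\), the entries \(M_{wu}\) are therefore constant over \(u\in L_i\), so \((My)_w=c\sum_{u\in L_i}y_u=0\). For \(w\in L_i\), only the block of \(M\) indexed by \(L_i\times L_i\) contributes. This block is \(aI+b(J-I)\) with the following values:
\begin{itemize}
\item for \(A\), \(a=b=0\);
\item for \(L\) and \(Q\), \(a=1\) and \(b=0\), because leaves in \(L_i\) are pairwise nonadjacent;
\item for \(D\), \(a=0\) and \(b=2\);
\item for \(\Deg+\varepsilon D\), \(a=1\) and \(b=2\varepsilon\).
\end{itemize}
Since \(Jy=0\) on \(L_i\), we get \(My=(a-b)y\). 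This gives the eigenvalues \(0,1,1,-2\) and \(1-2\varepsilon\) respectively, which match the displayed factors, so each factor appears with multiplicity \(m(R_r(G))\).

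The only point needing care is the case \(I_+\neq\{1,\dots,n\}\). There the empty cells are dropped, and one must confirm that the quotient identity \(MC=C\mathcal Q_M\) still holds after the deletion. This is immediate, since the deleted cells contribute nothing. Beyond that, the argument is routine bookkeeping; the main conceptual step is the twin-leaf observation that \(M\) is constant across \(L_i\) from outside, which makes \(\mathcal U^\perp\) an eigenspace.
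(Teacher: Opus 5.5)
Your proposal is correct and follows essentially the same argument as the paper. Both split $\RR^{V(R_r(G))}$ into the column space $\mathcal U$ of the characteristic matrix, on which each matrix acts by its quotient, and its orthogonal complement, the space of vectors vanishing on the original vertices and summing to zero on each $L_i$, on which $A,L,Q,D,\Deg+\varepsilon D$ act as the scalars $0,1,1,-2,1-2\varepsilon$. The differences are cosmetic: you obtain invariance of $\mathcal U^{\perp}$ from symmetry and organize the scalar computation through the twin-leaf block $aI+b(J-I)$, whereas the paper checks $Mz=\lambda z$ coordinatewise.
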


\begin{proof}
Let \(S\) be the characteristic matrix of \(\Pi\), and
let \(\mathcal U=\operatorname{Col}(S)\) denote its column space. By
Proposition~\ref{prop:quotient-matrices}, for each matrix $M\in\mathcal{M}$, then we have
\[
        M(R_r(G))S
        =
        S\mathcal Q_M(R_r(G)),
\]
where \(\mathcal Q_M(R_r(G))\) denotes the quotient matrix associated with $M$.  For details, see \cite[Section 2.3]{Brouwer}.
Consequently, \(\mathcal U\) is invariant under every matrix under
consideration. Moreover, with respect to the basis formed by the columns of
\(S\), the restriction of each matrix to \(\mathcal U\) is represented by
the corresponding quotient matrix.

Define
\[
\mathcal W
=
\left\{
 z\in\RR^{V(R_r(G))}:
 z_{v_i}=0\ \text{for }1\le i\le n,
 \quad
 \sum_{u\in L_i}z_u=0\ \text{for every }i\in I_+
\right\}.
\]
Then
\[
\dim \mathcal{W}
=
\sum_{i\in I_+}(p_i-1)
=
m(R_r(G)).
\]
Furthermore,
\[
       \dim\mathcal U+\dim \mathcal{W} |\Pi|+\dim\mathcal W
        =
        n+|I_+|+\sum_{i\in I_+}(p_i-1)
        =
        n+\sum_{i\in I_+}p_i
        =
        |V(R_r(G))|.
\]
The subspace \(\mathcal U\) consists precisely of the vectors that are constant
on each cell of \(\Pi\). Therefore, we obtain \(\mathcal W=\mathcal U^{\perp}\), and hence
\[
        \RR^{V(R_r(G))}=\mathcal U\oplus\mathcal W.
\]

We now compute the action of the relevant matrices on \(\mathcal W\). Let
\(z\in\mathcal W\).  For the adjacency matrix, if \(v_i\) is an original
vertex, then
\[
        (Az)_{v_i}
        =
        \sum_{u\sim v_i}z_u
        =
        \sum_{u\in L_i}z_u
        =
        0.
\]
 If \(u\in L_i\), then \(u\) is adjacent
only to \(v_i\), and hence
\[
        (Az)_u=z_{v_i}=0.
\]
Thus $Az=0$, and hence $z$ is an eigenvector of $A$ corresponding to eigenvalue $0$.

Moreover, $
        Lz=(\Deg-A)z=z,$ and $
        Qz=(\Deg+A)z=z.$ Thus, $z$ is an eigenvector of $Q$ and $L$ corresponding to $1$.

Next consider the distance matrix \(D=D(R_r(G))\). If \(v_s\) is an original
vertex, then the distance from \(v_s\) to a vertex \(u\in L_i\) is independent
of the choice of \(u\) inside \(L_i\). Therefore
\[
\begin{aligned}
        (Dz)_{v_s}
        &=
        \sum_{i\in I_+}\sum_{u\in L_i}
        \dist(v_s,u)z_u  =
        \sum_{i\in I_+}
        \dist(v_s,v_{i,1})
        \sum_{u\in L_i}z_u
        =
        0.
\end{aligned}
\]
Now let \(u\in L_i\). For vertices \(w\in L_i\) with \(w\ne u\), we have
\(\dist(u,w)=2\). For each \(j\ne i\), the distance from \(u\) to \(w\in L_j\)
is independent of the choice of \(w\) inside \(L_j\). Hence
\[
\begin{aligned}
        (Dz)_u
        &=
        \sum_{\substack{w\in L_i\\ w\ne u}}
        \dist(u,w)z_w
        +
        \sum_{\substack{j\in I_+\\ j\ne i}}
        \sum_{w\in L_j}
        \dist(u,w)z_w  \\
        &=
        2\sum_{\substack{w\in L_i\\ w\ne u}}z_w
        +
        \sum_{\substack{j\in I_+\\ j\ne i}}
        \dist(u,v_{j,1})
        \sum_{w\in L_j}z_w
        =
        -2z_u.
\end{aligned}
\]
Thus, we obtain $Dz=-2z$, and hence $z$ is an eigenvector of $D$ corresponding to the eigenvalue $-2$.
Since \(\Deg z=z\) on \(\mathcal W\), it follows that
\[
        (\Deg+\varepsilon D)z
        =
        z+\varepsilon(-2z)
        =
        (1-2\varepsilon)z,
        \qquad \varepsilon\in\{1,-1\}.
\]
Then $1-2\varepsilon$ is eigenvalue of $\Deg+\varepsilon D$.
Each of these eigenvalues occurs with multiplicity at least
\(\dim\mathcal W=m(R_r(G))\).

 Taking characteristic polynomials yields the stated factorizations.
\end{proof}

\begin{remark}
If some \(p_i=0\), the same statement holds after deleting the empty cells
\(L_i\).  This is the case needed below when \(r=3\).  In that case the
quotient matrices are obtained from the same row-sum formulas, but only
nonempty leaf cells are included.
\end{remark}

\section{Proof of Theorem~\ref{thm:intro-main}}
\label{sec:main-proof}

This section is devoted to proving Theorem~\ref{thm:intro-main}.
In particular, the infinite families of pairs of trees constructed therein fail to be cospectral with respect to both $\Atrs$ and $\Atrsp$. We begin by presenting the explicit construction of this family of trees.

Let $T_1$ and $T_2$ be a pair of trees of order $17$, as illustrated in Figure~\ref{fig:cospectral graphs 1}.\\
\begin{figure}[ht]
\centering
  \begin{minipage}[t]{0.48\textwidth}
\centering
 \begin{tikzpicture}[scale=0.65, every node/.style={circle, fill=black, inner sep=1.6pt}, every edge/.style={draw=black, line width=2.8pt}]
 \footnotesize{

  \node (b1-1)  at (0,0) {};
    \node (b1-2)  at (0.7,0.6) {};
    \node (b1-3)  at (-0.7,0.6) {};

  \node (b1-4)  at (-1.5,1.4) {};
    \node (b1-5)  at (-2.0,2.2) {};
    \node (b1-6)  at (-2.5,3.0) {};
    \node (b1-7)  at (-1.2,2.2) {};
    \node (b1-8)  at (-0.9,3.0) {};
   \node (b1-9)  at (-0.4,1.4) {};

    \node (b1-10) at (1.7,1.4) {};
    \node (b1-11) at (2.1,2.2) {};
    \node (b1-12) at (2.5,3.0) {};
    \node (b1-13) at (2.1,4.0) {};
    \node (b1-14) at (3.0,4.0) {};
    \node (b1-15) at (0.8,1.4) {};
    \node (b1-16) at (0.5,2.2) {};
    \node (b1-17) at (0.2,3.0) {};

        \draw [line width=0.8](b1-1) -- (b1-2);
        \draw [line width=0.8](b1-1) -- (b1-3);
         \draw [line width=0.8](b1-3) -- (b1-9);
         \draw [line width=0.8](b1-3) -- (b1-4);
         \draw [line width=0.8](b1-4) -- (b1-5);
         \draw [line width=0.8](b1-5) -- (b1-6);
         \draw [line width=0.8](b1-4) -- (b1-7);
         \draw [line width=0.8](b1-7) -- (b1-8);
         \draw [line width=0.8](b1-2) -- (b1-15);
         \draw [line width=0.8](b1-15) -- (b1-16);
         \draw [line width=0.8](b1-16) -- (b1-17);
         \draw [line width=0.8](b1-2) -- (b1-10);
         \draw [line width=0.8](b1-10) -- (b1-11);
         \draw [line width=0.8](b1-11) -- (b1-12);
         \draw [line width=0.8](b1-12) -- (b1-13);
         \draw [line width=0.8](b1-12) -- (b1-14);

 }

     \end{tikzpicture}
\end{minipage}
\begin{minipage}[t]{0.48\textwidth}
\centering
 \begin{tikzpicture}[scale=0.65, every node/.style={circle, fill=black, inner sep=1.6pt}, every edge/.style={draw=black, line width=2.8pt}]
 \footnotesize{

  \node (b1-1)  at (0,0) {};
    \node (b1-2)  at (0.7,0.6) {};
    \node (b1-3)  at (-0.7,0.6) {};

  \node (b1-4)  at (-1.5,1.4) {};
    \node (b1-5)  at (-2.0,2.2) {};
    \node (b1-6)  at (-2.5,3.0) {};
    \node (b1-7)  at (-1.2,2.2) {};
    \node (b1-8)  at (-0.9,3.0) {};

    \node (b1-9)  at (1.3,2.2) {};
    \node (b1-10) at (1.7,1.4) {};
    \node (b1-11) at (2.1,2.2) {};
    \node (b1-12) at (2.5,3.0) {};
    \node (b1-13) at (2.1,4.0) {};
    \node (b1-14) at (3.0,4.0) {};

    \node (b1-15) at (0.5,1.4) {};
    \node (b1-16) at (0.2,2.2) {};
    \node (b1-17) at (-0.1,3.0) {};

        \draw [line width=0.8](b1-1) -- (b1-2);
        \draw [line width=0.8](b1-1) -- (b1-3);
         \draw [line width=0.8](b1-10) -- (b1-9);
         \draw [line width=0.8](b1-3) -- (b1-4);
         \draw [line width=0.8](b1-4) -- (b1-5);
         \draw [line width=0.8](b1-5) -- (b1-6);
         \draw [line width=0.8](b1-4) -- (b1-7);
         \draw [line width=0.8](b1-7) -- (b1-8);
         \draw [line width=0.8](b1-2) -- (b1-15);
         \draw [line width=0.8](b1-15) -- (b1-16);
         \draw [line width=0.8](b1-16) -- (b1-17);
         \draw [line width=0.8](b1-2) -- (b1-10);
         \draw [line width=0.8](b1-10) -- (b1-11);
         \draw [line width=0.8](b1-11) -- (b1-12);
         \draw [line width=0.8](b1-12) -- (b1-13);
         \draw [line width=0.8](b1-12) -- (b1-14);

 }

     \end{tikzpicture}
\end{minipage}

\caption{\label{fig:cospectral graphs 1} A pair of trees $T_1$ (left) and $T_2$ (right). }
\end{figure}
\begin{lemma}\label{lem:B1B2-nonisomorphic}
The pair of trees \(T_1\) and \(T_2\) given as in Fig.~\ref{fig:cospectral graphs 1} are non-isomorphic.
\end{lemma}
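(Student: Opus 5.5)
The plan is to exhibit a graph invariant, preserved by every isomorphism, that takes different values on $T_1$ and $T_2$. First I will read off the edge lists from Figure~\ref{fig:cospectral graphs 1}, using the node labels $1,\ldots,17$ of the TikZ source.

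The trees share one common edge set:
\[
12,\;13,\;34,\;45,\;56,\;47,\;78,\;2\,15,\;15\,16,\;16\,17,\;2\,10,\;10\,11,\;11\,12,\;12\,13,\;12\,14 .
\]
$T_1$ additionally contains the edge $3\,9$, while $T_2$ contains $10\,9$ instead. The elementary invariants agree, so they cannot separate the trees:
\begin{itemize}
\item Both trees have the same six leaves, $6,8,9,13,14,17$.
\item Both have exactly four vertices of degree $3$ and no vertex of larger degree.
\item In both, the subgraph induced on the degree-$3$ vertices consists of exactly one edge.
\end{itemize}
So a slightly finer invariant is needed.

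The invariant I will use is the multiset of the six pairwise distances among the degree-$3$ vertices. An isomorphism maps degree-$3$ vertices onto degree-$3$ vertices and preserves distances, so this multiset is an isomorphism invariant.

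In $T_1$ the degree-$3$ vertices are $2,3,4,12$. Along the unique tree paths the distances are
\[
d(3,4)=1,\quad d(2,3)=2,\quad d(2,4)=3,\quad d(2,12)=3,\quad d(3,12)=5,\quad d(4,12)=6,
\]
giving the multiset $\{1,2,3,3,5,6\}$. In $T_2$ the degree-$3$ vertices are $2,4,10,12$. Here
\[
d(2,10)=1,\quad d(10,12)=2,\quad d(2,4)=3,\quad d(2,12)=3,\quad d(4,10)=4,\quad d(4,12)=6,
\]
giving the multiset $\{1,2,3,3,4,6\}$. These multisets differ, because $5$ occurs only for $T_1$ and $4$ only for $T_2$. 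Hence $T_1\not\cong T_2$.

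The only real obstacle is transcribing the figure correctly, in particular noticing that vertex $3$ has degree $3$ in $T_1$ but degree $2$ in $T_2$, and that the reverse holds for vertex $10$. Once the edge lists are fixed, each distance computation is a short path trace, and I will state the edge lists explicitly in the proof so the reader can check every path.
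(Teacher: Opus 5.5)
Your proof is correct and is essentially the paper's own argument. The paper also uses the multiset of pairwise distances among the four degree-$3$ vertices, obtaining $\{1,2,3,3,5,6\}$ for $T_1$ and $\{1,2,3,3,4,6\}$ for $T_2$; your edge lists and path computations match the figure, and writing edges as $\{1,2\}$ rather than $12$ would avoid confusion with vertex $12$.
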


\begin{proof}
Consider the vertices of degree 3 in $T_1$ and $T_2$, respectively.
In $T_1$, the multiset of pairwise distances among them is
\[
        \{1,2,3,3,5,6\}.
\]
In $T_2$, the multiset of pairwise distances among them is
\[
        \{1,2,3,3,4,6\}.
\]
This multiset is an isomorphism invariant. As the two multisets are different, we conclude that \(T_1\not\cong T_2\).
\end{proof}

The maximum degrees of $T_1$ and $T_2$ are both equal to \(3\).
For each integer \(r\ge 3\), let $R_r(T_1)$ and $R_r(T_2)$ be \(r\)-regularized leaf extension of \(T_1\) and \(T_2\), respectively. We define $ T_1^{(r)}=R_r(T_1)$, and
        $T_2^{(r)}=R_r(T_2).$

\begin{lemma}\label{lem:T-nonisomorphic}

For every \(r\ge 3\), the trees \(T_1^{(r)}\) and \(T_2^{(r)}\) are non-isomorphic trees of order \(17r-15\).
\end{lemma}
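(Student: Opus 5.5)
The argument splits into a vertex count and a recovery step: we show that $T_i$ can be reconstructed from $R_r(T_i)$ up to isomorphism. Non-isomorphism of the extensions then follows from Lemma~\ref{lem:B1B2-nonisomorphic}.

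\emph{Order.} $R_r(T_i)$ has $17$ original vertices plus $\sum_{v}(r-\deg_{T_i}(v))$ new leaves. Since $T_i$ is a tree on $17$ vertices, $\sum_v \deg_{T_i}(v)=2\cdot 16=32$. The number of added leaves is therefore $17r-32$, and the total order is $17+17r-32=17r-15$.

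\emph{Recovery of the base tree.} Fix $r\ge 3$, and let $\ell(H)$ denote the tree obtained from a tree $H$ by deleting all of its leaves. This operation is clearly isomorphism-invariant. We claim that $\ell(R_r(T_i))=\ell(T_i)$, i.e.\ the internal subtree of $T_i$.
\begin{itemize}
\item Every original vertex has degree $r\ge 3$ in $R_r(T_i)$, so it is not a leaf there.
\item Every new vertex is a leaf.
\end{itemize}
Hence the leaves of $R_r(T_i)$ are exactly the new vertices, and deleting them gives back $T_i$ itself, not merely $\ell(T_i)$. So $\ell(R_r(T_i))=T_i$ for every $r\ge 3$. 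We do not need the case $\deg=r$ for leaves: original leaves of $T_i$ have degree $1<3\le r$ in $T_i$ and are raised to degree $r$.

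\emph{Conclusion.} An isomorphism $R_r(T_1)\cong R_r(T_2)$ maps leaves to leaves, so it restricts to an isomorphism $T_1\cong T_2$. This contradicts Lemma~\ref{lem:B1B2-nonisomorphic}.

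The only step needing care is the claim that the leaf set of $R_r(T_i)$ is exactly the set of attached vertices. This uses $r\ge 3>1$, so no original vertex remains a leaf, and it holds for every $r\ge 3$, including $r=3$, where some vertices receive no new leaves. I expect no real obstacle beyond stating this cleanly.
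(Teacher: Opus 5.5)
Your proof is correct and is essentially the paper's argument: you count $17+(17r-32)=17r-15$ vertices exactly as the paper does, and you identify the original vertices as the non-leaves where the paper uses the equivalent characterization as the degree-$r$ vertices, so an isomorphism $T_1^{(r)}\cong T_2^{(r)}$ restricts to $T_1\cong T_2$. One small slip: your initial claim $\ell(R_r(T_i))=\ell(T_i)$ is false as written; the correct statement is $\ell(R_r(T_i))=T_i$, which is what you actually prove and use.
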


\begin{proof}
Since \(T_1\) and \(T_2\) have \(17\) vertices and \(16\) edges, the
number of leaves attached to the 17 vertices is
\[
        \sum_{v\in V(T_k)}(r-\deg_{T_k}(v))
        =
        17r-2\cdot 16
        =
        17r-32,
\]
and hence
\[
        |V(T_k^{(r)})|
        =
        17+(17r-32)
        =
        17r-15,
\]
for $k=1,2$.
In \(T_k^{(r)}\), the original vertices are precisely the vertices of degree \(r\), while all newly attached vertices are leaves. Hence, any isomorphism between \(T_1^{(r)}\) and \(T_2^{(r)}\) restrict to an isomorphism between the subgraphs induced by their degree \(r\) vertices, namely between \(T_1\) and \(T_2\). This contradicts Lemma~\ref{lem:B1B2-nonisomorphic}.
\end{proof}


\begin{lemma}\label{prop:quotient-identities}

Let $M \in \mathcal{M}$ be a matrix type.  For \(r\ge 3\), the quotient characteristic polynomials associated with $T_1^{(r)}$ and $T_2^{(r)}$
are equal.
\end{lemma}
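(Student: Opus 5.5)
We prove that for each $M\in\mathcal M$ the polynomials $\chi_{\mathcal Q_M(T_1^{(r)})}(x)$ and $\chi_{\mathcal Q_M(T_2^{(r)})}(x)$, viewed as polynomials in $x$ with coefficients in $\ZZ[r]$, are identical. The first step is to remove $P$ from the picture. Since $p_i=r-\deg_{T_k}(v_i)$, we have $P=rI-\Deg(T_k)$, so each quotient matrix in Proposition~\ref{prop:quotient-matrices} is a $34\times 34$ matrix whose entries are affine in $r$. We treat the case $r\ge 4$ first, where $I_+=\{1,\dots,17\}$. Each characteristic polynomial is then a polynomial in $(x,r)$. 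To prove equality for all $r\ge 4$ it suffices to prove it as an identity in $\ZZ[x,r]$.

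The second step reduces the size of the matrices by taking Schur complements of the leaf blocks. For $A$, the block structure $\begin{pmatrix}A_G & P\\ I & 0\end{pmatrix}$ gives
\[
\chi_{\mathcal Q_A}(x)=\det\bigl(x^2I-xA_G-P\bigr).
\]
This is a $17\times17$ determinant in which $P=rI-\Deg$. Similarly, for $L$ and $Q$ we get
\[
\chi(x)=\det\bigl((x-1)(xI-rI\pm A_G)-P\bigr).
\]
All three of these are determinants of the form $\det(\alpha I+\beta \Deg(T_k)+\gamma A(T_k))$ with scalar $\alpha,\beta,\gamma$. Equality for all such triples holds if and only if $T_1$ and $T_2$ are cospectral for every matrix $\Deg+tA$ in the generalized sense, that is, if $\det(yI-\Deg+tA)$ agrees for both trees. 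This is a finite check: compare the bivariate polynomials $\det(yI-\Deg(T_k)-tA(T_k))$, which we verify by direct computation (and presumably $T_1,T_2$ were chosen precisely for this property).

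For $D$ and $\mathcal Q^{\deg}_{\varepsilon}$, the blocks involve $D_G$, $J$ and $P$. We eliminate the lower-right block by a Schur complement, or more symmetrically use row and column operations: subtract the first block row from the second to replace $D_G+J$ by $J$, and similarly for columns. The resulting determinant depends on $T_k$ only through $D(T_k)$, $\Deg(T_k)$ and rank-one terms in $J$. By the matrix determinant lemma, these reduce to determinants of the form $\det(aI+bD+c\Deg+d\Deg D+\dots)$ together with quadratic forms $\mathbf 1^{\mathrm T}(\cdot)^{-1}\mathbf 1$. We therefore verify equality by computing the two $34\times34$ (or reduced $17\times17$) determinants symbolically in $(x,r)$ with a computer algebra system. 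Since the entries are affine in $r$ and the degree in $r$ is at most $17$, it is enough to check equality for finitely many values of $r$ (say $18$ values) as polynomials in $x$ with integer coefficients. This is rigorous polynomial interpolation.

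The case $r=3$ is handled separately. Here the cells $L_i$ for the degree-$3$ vertices are empty, and by the Remark following Proposition~\ref{prop:quotient-matrices} the quotient matrix is obtained by deleting these rows and columns. We must also check that both trees have the same number of degree-$3$ vertices, which they do (four each, by Lemma~\ref{lem:B1B2-nonisomorphic}), so the quotients have equal size. The six characteristic polynomials are then computed directly.

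The main obstacle is that the distance-type identities are not implied by the generalized $A$/$\Deg$ cospectrality. They require a genuinely separate verification, namely the coincidence of the joint data of $D(T_k)$, $\Deg(T_k)$ and $J$. We would first try to reduce them conceptually: write $P$-weighted sums in terms of the vector $\mathbf 1$ and $\Deg\mathbf 1$, and check equality of invariants such as $\det(zI-D+s\Deg)$ and the walk-type sums $\mathbf 1^{\mathrm T}(\cdot)^{-1}\mathbf 1$. Failing a clean reduction, we fall back on the finite symbolic computation described above.
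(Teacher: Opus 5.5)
Your proposal is essentially sound. It differs from the paper's route for $A$, $L$, $Q$, and one degree bound used for the interpolation shortcut is wrong as stated.

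\textbf{Comparison of routes.} The paper treats all six types uniformly. For $r\ge 4$ it views the difference of the two $34\times34$ quotient characteristic polynomials as a polynomial of degree at most $34$ in $r$, since every entry is affine in $r$. It then checks vanishing at the $35$ values $r=4,\dots,38$, and handles $r=3$ by direct computation.

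For $A$, $L$, $Q$ you instead use that the lower blocks of $xI-\mathcal Q_A$, $xI-\mathcal Q_L$, $xI-\mathcal Q_Q$ are commuting scalar matrices. Each characteristic polynomial therefore collapses to a $17\times17$ determinant $\det(\alpha I+\Deg(T_k)+\gamma A(T_k))$ with $\alpha,\gamma$ depending on $(x,r)$. All three identities, for every $r\ge4$ at once, then follow from the single $r$-free identity $\det(yI-\Deg(T_1)-tA(T_1))=\det(yI-\Deg(T_2)-tA(T_2))$ in $\ZZ[y,t]$.

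This is correct and more informative than the paper's brute-force check, because it isolates the property of the base pair that survives the extension. Since $(r,x)\mapsto(x^2-r,x)$ is an invertible polynomial map, the $A$-identity for all $r\ge4$ is in fact equivalent to this bivariate identity. So for $r\ge 4$ the $A$-case already implies the $L$- and $Q$-cases. It also replaces the $35$-point interpolation for these three types by one bivariate $17\times17$ comparison.

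For $D$, $\Ddegp$, $\Ddeg$ your conceptual reduction is only sketched, so there you end up, like the paper, with a machine computation. The case $r=3$ is handled exactly as in the paper.

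\textbf{The degree bound.} You claim that the degree in $r$ is at most $17$, so that $18$ values of $r$ suffice.
\begin{itemize}
\item This is true for $\mathcal Q_D$. There $r$ enters $xI-\mathcal Q_D$ only through $P$, i.e.\ only in the $17$ leaf columns, and column multilinearity gives the bound.
\item It is false for $\mathcal Q^{\deg}_{\varepsilon}$. The block $rI$ puts $r$ into the original-vertex columns as well. For instance, the constant coefficient $\det\mathcal Q^{\deg}_{\varepsilon}$ has $r^{34}$-coefficient $\varepsilon^{17}\det(D_G+2J)$. This is nonzero: by the Graham--Lov\'asz formula it equals $\varepsilon^{17}(-1)^{16}\cdot 20\cdot 2^{15}$.
\end{itemize}
So ``entries affine in $r$'' by itself only gives the bound $34$, which is why the paper samples $35$ values.

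Your count of $18$ can be rescued as follows. Substitute $x=y+r$. Then the upper-left block becomes $yI-\varepsilon D_G$ and the lower-left block is $-\varepsilon(D_G+J)$, so $r$ appears only in the $17$ leaf columns. Hence $\Delta(y+r,r)$ has $r$-degree at most $17$, coefficientwise in $y$. Moreover, $\Delta(x,r_0)=0$ in $\ZZ[x]$ if and only if $\Delta(y+r_0,r_0)=0$ in $\ZZ[y]$, so $18$ sample values of $r$ do suffice.

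Alternatively, compute the determinants symbolically in $(x,r)$, as your first sentence on these cases proposes. That needs no degree bound at all.
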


\begin{proof}
Firstly, we distinguish the following two cases:

\noindent\textbf{Case 1.} $r = 3$.
After removing the empty leaf cells, direct computation shows that the
characteristic polynomials of the two quotient matrices corresponding to $M$ for $T_1^{(r)}$ and $T_2^{(r)}$ are identical, where
$M \in  \mathcal{M}$.

\noindent\textbf{Case 2.} $r \ge 4$.
 Each leaf cell is nonempty, so all associated quotient matrices have order exactly $34$.  For a fixed matrix type \(M\), let
\[
        \Delta_M(r,x)
        =
        \chi_{\mathcal Q_M(T_1^{(r)})}(x)
        -
        \chi_{\mathcal Q_M(T_2^{(r)})}(x),
\]
where \(\mathcal Q_M(T_k^{(r)})\) denotes the quotient matrix corresponding to $M$ on $T_k^{(r)}$, for $k=1,2$.

By Proposition~\ref{prop:quotient-matrices}, we know that each entry of the quotient matrices is affine in \(r\). Consequently, $\Delta_M(r,x) \in \mathbb{Z}[r,x]$ can be treated as a polynomial of degree at most $34$ in $r$, whose each coefficient is a polynomial in \(x\). Thus we can write
\[
    \Delta_M(r,x) = \sum_{k=0}^{34} c_k(x)\, r^k,
\]
where $c_k(x) \in \ZZ[x]$.
A direct exact computation using Python shows that
\[
    \Delta_M(s,x) = 0,
    \qquad\text{for } s = 4, 5, \dots, 38.
\]
 It indicates that $\Delta_M(s,x)=0$ has $35$ distinct integer solutions,  so every coefficient polynomial $c_k(x)$ satisfies $c_k(x) = 0$ for $0 \le k \le 34$.  Thus, $\Delta_M(r,x) = 0$ identically over $\mathbb{Z}[r,x]$, and  hence the two
quotient characteristic polynomials coincide for every matrix type $M$. Calculations were performed using Python; for details, refer to the appendix~\ref{app:python}.

 Combining the two cases completes the proof.
\end{proof}
 We now prove Theorem~\ref{thm:intro-main} by combining the preceding results.
\begin{proof}[Proof of Theorem~\ref{thm:intro-main}]
 Lemma~\ref{lem:T-nonisomorphic} gives that \(T_1^{(r)}\) and \(T_2^{(r)}\) are non-isomorphic trees of order \(17r-15\), for every \(r\ge 3\).
It remains to prove cospectrality.  Since \(T_1\) and \(T_2\) have the same
degree sequence, the multisets $ \{r-\deg_{T_1}(v):v\in V(T_1)\}$ and $\{r-\deg_{T_2}(v):v\in V(T_2)\}$ are equal.  Hence, \(m(T_1^{(r)})=m(T_2^{(r)})\) in
Lemma~\ref{lem:quotient-factorization}.

These quotient characteristic polynomials agree
by Lemma~\ref{prop:quotient-identities}. Therefore, from the factorization of the characteristic polynomial given in Lemma~\ref{lem:quotient-factorization}, we obtain
\(T_1^{(r)}\) and \(T_2^{(r)}\) are simultaneously cospectral for $\{A, L, Q, D, \Ddegp, \Ddeg\}$.
\end{proof}

\begin{corollary}\label{cor:counterexamples}
Not all trees are determined by the \(\Ddegp\)-spectrum and are not determined by
the \(\Ddeg\)-spectrum.
\end{corollary}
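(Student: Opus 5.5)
The corollary should follow directly from Theorem~\ref{thm:intro-main}. We only need to exhibit one pair of non-isomorphic trees that are \(\Ddegp\)-cospectral, and one pair that is \(\Ddeg\)-cospectral.

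Fix any integer \(r\ge 3\); the smallest choice \(r=3\) gives trees on \(17\cdot 3-15=36\) vertices. Take \(T_1^{(r)}=R_r(T_1)\) and \(T_2^{(r)}=R_r(T_2)\).

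By Lemma~\ref{lem:T-nonisomorphic}, these trees are non-isomorphic. By Theorem~\ref{thm:intro-main}, they are simultaneously cospectral for every matrix in \(\mathcal M\), and in particular for \(\Ddegp\) and for \(\Ddeg\).

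Now suppose all trees were determined by the \(\Ddegp\)-spectrum. Then \(T_1^{(r)}\), being \(\Ddegp\)-cospectral with \(T_2^{(r)}\), would have to be isomorphic to it, which is a contradiction. The identical argument with \(\Ddeg\) in place of \(\Ddegp\) gives the second claim. Since \(r\ge3\) is arbitrary, this in fact produces infinitely many counterexamples, and so Conjecture~\ref{conj} fails in both parts.

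There is no real obstacle at this stage. All the substantive content sits in Lemma~\ref{prop:quotient-identities}: the identity of the quotient characteristic polynomials, which is verified computationally, together with its interpolation argument in \(r\). The corollary only repackages the theorem in the language of DMS.
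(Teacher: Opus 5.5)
Your proposal is correct and is essentially the paper's argument: the paper also derives the corollary directly from Theorem~\ref{thm:intro-main}. That theorem supplies non-isomorphic pairs $T_1^{(r)},T_2^{(r)}$ that are both $\Ddegp$-cospectral and $\Ddeg$-cospectral, so neither spectrum determines all trees; your explicit appeal to Lemma~\ref{lem:T-nonisomorphic} and the $r=3$ instance on $36$ vertices simply spells out what the paper's one-line proof leaves implicit.
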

\begin{proof}
        By Theorem~\ref{thm:intro-main}, we gives an infinite family of pairs of non-isomorphic
\(\Ddegp\)-cospectral trees and  \(\Ddeg\)-cospectral trees,
so not all trees are determined by either of these spectrum.
\end{proof}

\begin{corollary}\label{cor:tuple-spectra}

The spectrum of matrices chosen from $\mathcal{M}$
cannot uniquely determines all trees.
\end{corollary}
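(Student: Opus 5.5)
The corollary follows directly from Theorem~\ref{thm:intro-main} and Lemma~\ref{lem:T-nonisomorphic}. We read it as follows: for any nonempty subfamily $\mathcal{M}'\subseteq\mathcal{M}$, the tuple of spectra $\bigl(\operatorname{spec} M(T)\bigr)_{M\in\mathcal{M}'}$ does not determine a tree $T$ up to isomorphism. Equivalently, there exist non-isomorphic trees $T$ and $T'$ with $\operatorname{spec} M(T)=\operatorname{spec} M(T')$ for every $M\in\mathcal{M}'$. It suffices to handle the largest case $\mathcal{M}'=\mathcal{M}$. If two trees agree on all six spectra, they agree on any sub-tuple, so one witness pair settles every choice of $\mathcal{M}'$ at once.

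Fix any $r\ge 3$; for definiteness $r=3$ suffices, giving trees on $36$ vertices. Take $T_1^{(r)}$ and $T_2^{(r)}$. By Lemma~\ref{lem:T-nonisomorphic} they are non-isomorphic. By Theorem~\ref{thm:intro-main} they are cospectral with respect to every $M\in\mathcal{M}$. Hence every map from trees to tuples of spectra indexed by $\mathcal{M}'$ sends these two distinct isomorphism classes to the same value. Such a map is therefore not injective on isomorphism classes of trees, which is exactly the claim.

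If one wants the stronger conclusion, namely that the failure occurs in infinitely many orders, the same argument works for each $r\ge3$. This yields counterexample pairs on $17r-15$ vertices for all $r\ge 3$. There is no real obstacle at this stage: all the substantive work is in Lemma~\ref{prop:quotient-identities} and Lemma~\ref{lem:quotient-factorization}. The only care needed is in stating precisely what ``determined by a tuple of spectra'' means, so that the reduction to the full tuple $\mathcal{M}$ is clearly legitimate.
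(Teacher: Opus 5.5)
Your proof is correct and follows the paper's own argument: $T_1^{(r)}$ and $T_2^{(r)}$ are non-isomorphic but cospectral for every matrix in $\mathcal{M}$, so no tuple of these spectra separates them. Your explicit remark that one witness for the full tuple covers every sub-tuple spells out what the paper's one-line proof leaves implicit.
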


\begin{proof}
The pair of trees \(T_1^{(r)}\) and \(T_2^{(r)}\) have the same spectrum for each
matrix in the displayed list, but are non-isomorphic.
\end{proof}

 It is therefore natural to ask whether the same
phenomenon persists for other matrices. The next result shows
that this is not the case for the transmission-adjacency matrix. In particular,
the above pair of trees is separated by the \(\Atrs\)-spectrum, and hence also
by the \(\Atrsp\)-spectrum.
\begin{lemma}\label{prop:family-not-Atrs-cospectral}
For every integer \(r\ge 3\), the trees \(T_1^{(r)}\) and \(T_2^{(r)}\) are
not \(\Atrs\)-cospectral. Consequently, they are not \(\Atrsp\)-cospectral.
\end{lemma}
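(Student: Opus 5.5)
The natural invariant to use is the trace: cospectral matrices have equal trace, and \(\operatorname{tr}\Atrs(G)=\sum_v \trs_G(v)\), because \(A\) has zero diagonal. This sum equals \(2W(G)\), where \(W(G)=\sum_{\{u,v\}}\dist_G(u,v)\) is the Wiener index. So it suffices to show \(W(T_1^{(r)})\ne W(T_2^{(r)})\) for every \(r\ge 3\). The statement about \(\Atrsp\) then follows from Corollary~\ref{cor:trees-plus-minus-Atrs}, or directly, since the two matrices have the same trace.

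This approach has an obstacle. By Lemma~\ref{prop:quotient-identities} the two trees are \(D\)-cospectral. For trees, \(\chi_D\) determines \(\operatorname{tr}D^2=2\sum_{u<v}\dist^2\), but not obviously \(\sum \dist\). So the Wiener index is not automatically equal, and I expect it to actually differ; that is what I will check. Should the Wiener indices coincide, I will fall back to the next trace invariant, \(\operatorname{tr}(\Atrs)^2=\sum_v\trs(v)^2+2|E|\). Since \(|E|\) is the same for both trees, this reduces to comparing \(\sum_v \trs(v)^2\).

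To get a formula valid for all \(r\), I use the decomposition from Proposition~\ref{prop:quotient-matrices}. Write \(p_i=r-\deg(v_i)\) and \(N=\sum_i p_i=17r-32\), the total number of added leaves. Grouping pairs of vertices by type:
\[
W(R_r(G))=W(G)+\sum_{i,j}p_j\bigl(\dist_G(v_i,v_j)+1\bigr)+\tfrac12\sum_{i\ne j}p_ip_j\bigl(\dist_G(v_i,v_j)+2\bigr)+\sum_i p_i(p_i-1).
\]
The terms involving only \(\sum p_i\), \(\sum p_i^2\), \(\sum_{i\ne j}p_ip_j\), and the constant \(17N\) agree for \(T_1\) and \(T_2\), because the two trees have the same degree sequence. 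So the difference of Wiener indices is a polynomial in \(r\) of degree at most \(2\), built from the base-tree quantities
\[
W(T_k),\qquad \sum_j \deg(v_j)\trs_{T_k}(v_j),\qquad \sum_{i,j}\deg(v_i)\deg(v_j)\dist_{T_k}(v_i,v_j).
\]
Expanding \(p_i=r-\deg(v_i)\), the coefficient of \(r^2\) is \(\tfrac12\sum_{i,j}\dist=W(T_k)\). The coefficient of \(r\) is a combination of \(W\) and \(\sum_j\deg(v_j)\trs(v_j)\). The constant term also involves the degree-weighted double sum.

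The main work is computing these three numbers for the two 17-vertex trees in Figure~\ref{fig:cospectral graphs 1}. This is a finite computation, done by hand via edge contributions: for a tree, \(W=\sum_{e}n_1(e)n_2(e)\), and the weighted versions are analogous, with sizes replaced by degree sums on each side of the edge. The two trees differ only in the position of one pendant vertex, which is attached to a neighbour of \(v_2\) in \(T_2\) instead of to \(v_3\). So most edge contributions cancel, and the difference reduces to a few edges.

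I expect \(W(T_1)=W(T_2)\). This would be forced if \(\sum \dist\) is tied to \(\sum\dist^2\), and the paper's emphasis on \(D\)-cospectrality suggests it. In that case the difference is linear in \(r\), say \(\alpha r+\beta\), and it remains to check that \(\alpha r+\beta\ne 0\) for every integer \(r\ge3\), for instance that \(\alpha\) and \(\beta\) have the same sign, or that \(\alpha=0\ne\beta\). If the trace difference turns out to vanish identically, I will redo the same grouping for \(\sum_v\trs(v)^2\), which is a polynomial of degree at most \(4\) in \(r\), and verify its nonvanishing the same way. This can be confirmed by exact computation at a few values of \(r\), as in Lemma~\ref{prop:quotient-identities}.
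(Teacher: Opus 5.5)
There is a genuine gap. Both invariants you plan to use take the same value on $T_1^{(r)}$ and $T_2^{(r)}$ for every $r$, and your plan stops before it reaches one that separates them.

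\textbf{The trace.} The obstruction is structural, not just the coincidence $W(T_1)=W(T_2)=584$. For any tree on $n$ vertices, $\sum_v\deg(v)\trs(v)=4W-n(n-1)$ and $\sum_{i,j}\deg_i\deg_j\dist(v_i,v_j)=2\bigl(4W-(2n-1)(n-1)\bigr)$. So all three base-tree quantities in your expansion are functions of $W(T_k)$. Hence $W(T_1^{(r)})-W(T_2^{(r)})$ vanishes identically; it is not some $\alpha r+\beta$ to be checked. This is forced anyway: for a tree $W=n\sum_{\mu\ne0}1/\mu$ over the Laplacian eigenvalues, and the trees are $L$-cospectral.

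\textbf{The second moment.} Your fallback $\operatorname{tr}(\Atrs^2)=\sum_v\trs(v)^2+2|E|$ fails as well. Write $t_i=\trs_{T_k}(v_i)$ and use the tree identity $\sum_j\deg_j\dist(v_i,v_j)=2t_i-(n-1)$. Then the original vertex $v_i$ has transmission $(r-1)t_i+17r-16$ in $T_k^{(r)}$, and each leaf at $v_i$ has transmission $(r-1)t_i+34r-33$. Consequently the $T_1$-minus-$T_2$ difference of $\sum_v\trs(v)^2$ equals $(r-1)^2$ times the quantity: $(r+1)$ times the difference of $\sum_i t_i^2$, minus the difference of $\sum_i\deg_i t_i^2$.

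With vertices labelled as in the appendix edge lists, the base transmissions are
$48,47,51,58,71,86,71,86,66,54,63,74,89,89,58,71,86$ for $T_1$ and $49,46,54,61,74,89,74,89,51,60,71,86,86,66,57,70,85$ for $T_2$. Both trees give $\sum_i t_i^2=83712$ and $\sum_i\deg_i t_i^2=138688$. So your exact checks at several values of $r$ would return $0$ every time.

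\textbf{The missing step.} You need the third moment, which is what the paper uses: $\operatorname{tr}(\Atrs^3)=\sum_v\trs(v)^3+3\sum_v\deg(v)\trs(v)$. The second sum again agrees for the two trees. The same reduction applies to the first sum, with $\sum_i t_i^3=6229360$ versus $6228304$ and $\sum_i\deg_i t_i^3=9689928$ versus $9687816$. This yields the difference $(r-1)^3\bigl(1056(r+1)-2112\bigr)=1056(r-1)^4\ne0$.
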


\begin{proof}
We first record a trace identity. Let \(T\) be a tree, let
\(R=\Trs(T)\), and let \(A=A(T)\). Since $\Atrs(T)=R-A$,
we have
\[
\begin{aligned}
\operatorname{tr}\bigl(\Atrs(T)^3\bigr)
&=
\operatorname{tr}\bigl((R-A)^3\bigr) \\
&=
\operatorname{tr}(R^3)
-\operatorname{tr}(R^2A)
-\operatorname{tr}(RAR)
-\operatorname{tr}(AR^2)  \\
&\quad
+\operatorname{tr}(RA^2)
+\operatorname{tr}(ARA)
+\operatorname{tr}(A^2R)
-\operatorname{tr}(A^3).
\end{aligned}
\]
The three terms containing exactly one factor \(A\) have trace zero, as
\(R\) is diagonal and \(A\) has zero diagonal. Also,
 since \(T\) is triangle-free, we have \(\operatorname{tr}(A^3)=0\).
For a vertex \(v_i\in V(T)\), since $A$ is a symmetric
$\{0,1\}$
-matrix, then we have
\[(A^2)_{ii}
        =
        \sum_{j=1}^{|V(T)|} a_{ij}a_{ji}
        =
        \sum_{j=1}^{|V(T)|} a_{ij}^2
        = \sum_{j=1}^{|V(T)|} a_{ij}
        =
        \deg_T(v_i).
\]
Since \(R\) is diagonal with \(R_{ii}=\trs_T(v_i)\), it follows that
\[
         \operatorname{tr}(A^2R)
        =\sum_{i=1}^{|V(T)|}(A^2R)_{ii}
        =
        \sum_{i=1}^{|V(T)|}(A^2)_{ii}R_{ii}
        =
        \sum_{v_i\in V(T)}\deg_T(v_i)\trs_T(v_i).
\]
Similarly, we obtain that $   \operatorname{tr}(A^2R)=\operatorname{tr}(RA^2)
        =
        \operatorname{tr}(ARA)
       .$
Therefore,
\[
\begin{aligned}
   \operatorname{tr}\bigl(\Atrs(T)^3\bigr)&=
\operatorname{tr}(R^3)
+
\operatorname{tr}(RA^2)
+
\operatorname{tr}(ARA)
+
\operatorname{tr}(A^2R)\\
&=
\sum_{v_i\in V(T)} \trs_T(v_i)^3
+
3\sum_{v_i\in V(T)} \trs_T(v_i)\deg_T(v_i).
\end{aligned}
\]

Now fix \(k\in\{1,2\}\), and write
$  d_{ij}^{(k)}=\dist_{T_k}(i,j),$ and $
        p_i^{(k)}=r-\deg_{T_k}(i).$
Then \(p_i^{(k)}\) is the number of new leaves attached to the original vertex
\(i\) in  \(T_k^{(r)}\). In \(T_k^{(r)}\), the transmission of the original vertex \(i\) is
\[
        \tau_i^{(k)}
        =
        \sum_{j=1}^{17} d_{ij}^{(k)}
        +
        \sum_{j=1}^{17}
        p_j^{(k)}\bigl(d_{ij}^{(k)}+1\bigr).
\]
If \(p_i^{(k)}>0\), then the transmission of any leaf attached to \(i\) is
\[
        \sigma_i^{(k)}
        =
        \sum_{j=1}^{17}\bigl(d_{ij}^{(k)}+1\bigr)
        +
        2\bigl(p_i^{(k)}-1\bigr)
        +
        \sum_{\substack{1\le j\le 17\\ j\ne i}}
        p_j^{(k)}\bigl(d_{ij}^{(k)}+2\bigr).
\]
If \(p_i^{(k)}=0\), no leaf is attached to \(i\). In this case the
corresponding terms below are multiplied by \(p_i^{(k)}\), and hence make no
contribution.

Since the original vertices have degree \(r\) in \(T_k^{(r)}\), whereas all
new vertices are leaves, the preceding trace identity gives

\begin{align}\label{main eq.}
\operatorname{tr}\bigl(\Atrs(T_k^{(r)})^3\bigr)
&=
\sum_{i=1}^{17} \bigl(\tau_i^{(k)}\bigr)^3
+
\sum_{i=1}^{17}
p_i^{(k)}\bigl(\sigma_i^{(k)}\bigr)^3  +
3\left(
r\sum_{i=1}^{17}\tau_i^{(k)}
+
\sum_{i=1}^{17}p_i^{(k)}\sigma_i^{(k)}
\right).
\end{align}

Using the explicit edge sets of \(T_1\) and \(T_2\), the values of
\(\tau_i^{(k)}\) and \(\sigma_i^{(k)}\) can be computed. Substituting these values into Eq.~(\ref{main eq.})
yields
\[
\operatorname{tr}\bigl(\Atrs(T_1^{(r)})^3\bigr)
=
19486776r^4
-
78568899r^3
+
118786167r^2
-
79812316r
+
20108280,
\]
and
\[
\operatorname{tr}\bigl(\Atrs(T_2^{(r)})^3\bigr)
=
19485720r^4
-
78564675r^3
+
118779831r^2
-
79808092r
+
20107224.
\]
Therefore
\[
\begin{aligned}
&\operatorname{tr}\bigl(\Atrs(T_1^{(r)})^3\bigr)
-
\operatorname{tr}\bigl(\Atrs(T_2^{(r)})^3\bigr)  \\
&\qquad =
1056r^4
-
4224r^3
+
6336r^2
-
4224r
+
1056  \\
&\qquad =
1056(r-1)^4.
\end{aligned}
\]
This number is nonzero for every \(r\ge 3\). Hence the third spectral moments
of \(\Atrs(T_1^{(r)})\) and \(\Atrs(T_2^{(r)})\) are different. Therefore, the
two trees are not \(\Atrs\)-cospectral.

Since tree is bipartite, Corollary~\ref{cor:trees-plus-minus-Atrs}
implies that the \(\Atrsp\)-spectrum and the \(\Atrs\)-spectrum coincide for
each tree. Thus, if \(T_1^{(r)}\) and \(T_2^{(r)}\) are not
\(\Atrs\)-cospectral, then they also are not \(\Atrsp\)-cospectral.
\end{proof}

\section*{Acknowledgements}

The Python/SymPy codes provided in Appendix A were generated with assistance from \mbox{GPT-5.6-sol}. All computational outputs have been independently verified by the authors. The authors bear full responsibility for the accuracy of the manuscript.

\appendix
\section{Python/SymPy code for the computational verification}
\label{app:python}

The following Python script verifies Lemma~\ref{prop:quotient-identities}.
It uses exact integer arithmetic only.  The check with \(r=3\) is performed
over \(\ZZ[x]\), with empty leaf cells deleted.  For \(r\ge4\), the quotient
matrices have order \(34\) and every entry is affine in \(r\).  Hence the
difference of the two quotient characteristic polynomials has degree at most
\(34\) in \(r\), coefficientwise in \(x\).  The script therefore checks the
identities exactly for \(r=4,5,\ldots,38\); these \(35\) values prove the
identities in \(\ZZ[r,x]\).

\begin{verbatim}
from collections import deque
import sympy as sp

E1 = [(1,2),(1,3),(2,10),(2,15),(3,4),(3,9),(4,5),(4,7),
      (5,6),(7,8),(10,11),(11,12),(12,13),(12,14),
      (15,16),(16,17)]

E2 = [(1,2),(1,3),(2,9),(2,15),(3,4),(4,5),(4,7),(5,6),
      (7,8),(9,10),(9,14),(10,11),(11,12),(11,13),
      (15,16),(16,17)]

N = 17
KINDS = ["A", "L", "Q", "D", "Ddeg+", "Ddeg-"]
x = sp.symbols("x")

def graph_data(edges):
    adj = [[] for _ in range(N)]
    A = sp.zeros(N, N)
    deg = [0] * N

    for a, b in edges:
        a -= 1
        b -= 1
        A[a, b] = 1
        A[b, a] = 1
        adj[a].append(b)
        adj[b].append(a)
        deg[a] += 1
        deg[b] += 1

    D = sp.zeros(N, N)
    for s in range(N):
        dist = [-1] * N
        dist[s] = 0
        queue = deque([s])
        while queue:
            u = queue.popleft()
            for v in adj[u]:
                if dist[v] == -1:
                    dist[v] = dist[u] + 1
                    queue.append(v)
        for j, d in enumerate(dist):
            D[s, j] = d

    return A, D, deg

DATA1 = graph_data(E1)
DATA2 = graph_data(E2)

def quotient(data, r, kind, include_zero_leaf_cells):
    A, D, deg = data
    r = sp.Integer(r)
    p = [r - deg[i] for i in range(N)]

    cells = [("O", i) for i in range(N)]
    for i in range(N):
        if include_zero_leaf_cells or p[i] != 0:
            cells.append(("L", i))

    Q = sp.zeros(len(cells), len(cells))

    for row, (ta, i) in enumerate(cells):
        for col, (tb, j) in enumerate(cells):

            if kind == "A":
                if ta == "O" and tb == "O":
                    Q[row, col] = A[i, j]
                elif ta == "O" and tb == "L":
                    Q[row, col] = p[j] if i == j else 0
                elif ta == "L" and tb == "O":
                    Q[row, col] = 1 if i == j else 0

            elif kind in ["L", "Q"]:
                sgn = -1 if kind == "L" else 1
                if ta == "O" and tb == "O":
                    Q[row, col] = r if i == j else sgn * A[i, j]
                elif ta == "O" and tb == "L":
                    Q[row, col] = sgn * p[j] if i == j else 0
                elif ta == "L" and tb == "O":
                    Q[row, col] = sgn if i == j else 0
                else:
                    Q[row, col] = 1 if i == j else 0

            elif kind == "D":
                if ta == "O" and tb == "O":
                    Q[row, col] = D[i, j]
                elif ta == "O" and tb == "L":
                    Q[row, col] = p[j] * (D[i, j] + 1)
                elif ta == "L" and tb == "O":
                    Q[row, col] = D[i, j] + 1
                else:
                    if i == j:
                        Q[row, col] = 2 * (p[i] - 1)
                    else:
                        Q[row, col] = p[j] * (D[i, j] + 2)

            elif kind in ["Ddeg+", "Ddeg-"]:
                eps = 1 if kind == "Ddeg+" else -1
                if ta == "O" and tb == "O":
                    Q[row, col] = r if i == j else eps * D[i, j]
                elif ta == "O" and tb == "L":
                    Q[row, col] = eps * p[j] * (D[i, j] + 1)
                elif ta == "L" and tb == "O":
                    Q[row, col] = eps * (D[i, j] + 1)
                else:
                    if i == j:
                        Q[row, col] = 1 + eps * 2 * (p[i] - 1)
                    else:
                        Q[row, col] = eps * p[j] * (D[i, j] + 2)

            else:
                raise ValueError(kind)

    return Q

def charpoly(data, r, kind, include_zero_leaf_cells):
    Q = quotient(data, r, kind, include_zero_leaf_cells)
    return sp.Poly(Q.charpoly(x).as_expr(), x)

# r = 3: some leaf cells are empty and must be deleted.
for kind in KINDS:
    c1 = charpoly(DATA1, 3, kind, include_zero_leaf_cells=False)
    c2 = charpoly(DATA2, 3, kind, include_zero_leaf_cells=False)
    assert c1 == c2, f"failed at r=3, kind={kind}"
    if c1 == c2:
        print(f"  r=3, kind={kind}: PASS")

# For r >= 4 all leaf cells are nonempty.  The quotient matrices have
# order 34, and all entries are affine in r.  Thus the difference of the
# two quotient characteristic polynomials has degree at most 34 in r,
# coefficientwise in x.  Checking 35 distinct integer values proves the
# identity in Z[r,x].
for kind in KINDS:
    for r in range(4, 39):
        c1 = charpoly(DATA1, r, kind, include_zero_leaf_cells=True)
        c2 = charpoly(DATA2, r, kind, include_zero_leaf_cells=True)
        assert c1 == c2, f"failed at r={r}, kind={kind}"
        if c1 == c2:
            print(f"  r={r}, kind={kind}: PASS")
        else:
            print(f"  r={r}, kind={kind}: FAIL")
            raise AssertionError

print("All exact quotient determinant identities verified.")
print("For r >= 4, equality at r = 4,...,38 proves the identity in Z[r,x].")

\end{verbatim}


\begin{thebibliography}{99}


\bibitem{AbiadAlfaroVillagran2025}
A.~Abiad, C.~A. Alfaro and R.~R. Villagr\'an,
Distinguishing graphs by their spectra, Smith normal forms and complements,
\emph{Applied Mathematics and Computation} \textbf{490} (2025), 129198.

\bibitem{AlfaroZapata2024}
C.~A. Alfaro, O.~Zapata,
The degree-distance and transmission-adjacency matrices,
\emph{Computational and Applied Math. }\textbf{43} (2024), Article 351.

\bibitem{Brouwer} A. E. Brouwer, W. H. Haemers, \emph{Spectra of Graphs}, Springer, 2011.

\bibitem{CS} L. Collatz, U. Sinogowitz, Spektren endlicher Grafen, \emph{Abh. Math. Sem. Univ. Hamburg} \textbf{21} (1957)
63-77.

\bibitem{DH}
E.R. Van Dam, W. H. Haemers, Which graphs are determined by their spectrum?
\emph{Linear Algebra Appl}. \textbf{373} (2003) 241-272.

\bibitem{DH1} E. R. Van Dam, W.H. Haemers, Developments on spectral characterizations
of graphs, \emph{Discrete Math}. \textbf{309} (2009) 576-586.

\bibitem{W4} W. Wang, A simple arithmetic criterion for graphs being
determined by their generalized spectra, \emph{J. Combin. Theory, Ser. B} \textbf{122} (2017) 438-451.

\bibitem{W3} W. Wang, Generalized spectral characterization of graphs revisited, \emph{Electronic J.
Combin.} \textbf{20 (4)} (2013), \#P4.


\end{thebibliography}
\end{document}